%

\documentclass[sii]{ipart}

\usepackage{amsthm,amsfonts,amsmath}
\usepackage{graphicx}
\usepackage{bbm}
\usepackage{subfig}
\usepackage{lipsum}
\usepackage[numbers,square]{natbib}
\RequirePackage{hyperref}

\pubyear{0000}
\volume{0}
\issue{0}
\firstpage{1}
\lastpage{10}
\arxiv{arXiv:1508.01264 }

\startlocaldefs
\DeclareMathOperator*{\argmin}{argmin}
\newcommand*{\argminl}{\argmin\limits}
\newtheorem{prop}{Proposition}
\newtheorem{corollary}{Corollary}

\endlocaldefs

\begin{document}

\begin{frontmatter}

\title{Clinical Trial Design Using A Stopped Negative Binomial Distribution}
\runtitle{Stopped Negative Binomial}



\begin{aug}
\author{\fnms{Michelle} \snm{DeVeaux} \ead[label=e1]{michelle.deveaux@regeneron.com}}
\address{Regeneron Pharmaceuticals Inc.\\ Tarrytown, NY\\ \printead{e1}}
\and
\author{\fnms{Michael J.} \snm{Kane}\thanksref{t2}\corref{} \ead[label=e2]{michael.kane@yale.edu}}
\address{Department of Biostatistics\\ School of Epidemiology and Public Health \\ Yale University, New Haven, CT \\ \printead{e2}}
\and
\author{\fnms{Daniel} \snm{Zelterman} \ead[label=e3]{daniel.zelterman@yale.edu}}
\address{Department of Biostatistics\\ School of Epidemiology and Public Health \\ Yale University, New Haven, CT \\ \printead{e3}}

\thankstext{t2}{Corresponding author.}

\runauthor{DeVeaux, Kane, and Zelterman}

\affiliation{Yale University}
\end{aug}

\begin{abstract}
We introduce a discrete distribution suggested by curtailed
sampling rules common in early-stage clinical trials. We derive the
distribution of the smallest number of independent and identically
distributed Bernoulli trials needed to observe either $s$ successes 
or $t$ failures. This report provides a closed-form expression for the 
mass function, moment generating function, and provides connections to other,
standard distributions.
\end{abstract}

\begin{keyword}[class=AMS]
\kwd[Primary ]{62E15}
\kwd[; secondary ]{62P10}
\end{keyword}

\begin{keyword}
\kwd{discrete distributions}
\kwd{stopped negative binomial distribution}
\kwd{early-stage clinical trials}
\kwd{curtailed clinical trials}
\end{keyword}

 
\end{frontmatter}

\section{Introduction and Motivation}

Consider a prototypical early phase, single-arm clinical trial in which 17 patients
are enrolled and treated. The trial is modeled as a sequence of independent Bernoulli($p$) samples. Suppose the Bernoulli probability of a patient 
responding to treatment is $p=0.2$ under the null hypothesis that the treatment is not any more effective than the current standard of care.
If seven or more patients out of these 17 respond to the treatment then we 
reject this hypothesis and claim the treatment has successfully showed superiority at 
a significance level of $0.1$.  If fewer than seven respond then the null 
hypothesis is not rejected and the treatment is said to have failed to show superiority. The trial ends when 
either seven responders or 11 non-responders are observed.

If all 17 patients are enrolled at once, as in the classic
design, then the sample size is 17. However, in most clinical trials the
patients are enrolled sequentially over time.
In the present example, observing seven
successful patients ends the trial and so the number of enrollees required
could be as small as seven. Similarly 11
observed treatment failures would also end the trial. This sampling mechanism, in
which the experiment ends as soon as either predefined endpoint is reached, is
called {\em curtailed sampling}. Under curtailed sampling, the range of the
sample size for this trial is seven through 17.

The distribution of the number of trial enrollments is shown in 
Figure~\ref{fig:kane_viz_1}. There is relatively little probability mass
for values of sample sizes from seven through 10 since $p$ is small and it 
is unlikely the treatment will succeed quickly.
Figure~\ref{fig:kane_viz_2} shows the expected value and variance for the
number of trial enrollments varying the success rate $p$ between zero and one. When $p$ is small then the treatment
is likely to fail shortly after the $11^{\text{th}}$ enrollment.
When $p$ is large then the treatment is more likely to succeed and the 
number of enrollees approaches seven from above. 

When $p=0$ or $1$ then the processes are deterministic and variance is zero.
Values of $p$ between zero and one change the mixing proportions of 
the two endpoints. When $p$ is close to zero, most of the variance
is contributed from the failure endpoint. The saddle around $p=0.25$ results 
from a trade-off between the success and failure endpoints.

In the rest of this work, we derive the distribution of the number of 
enrollees needed
to observe either $s$ successes or $t$ failures. We refer to this distribution
as the Stopped Negative Binomial (SNB). 
In Section 2 we derive the distribution function and explores its properties.
Section 3 derives the moment generating function.
Section 4 describes the likelihood function.
Section 5 describes the posterior and predictive probability distributions. Section 6 relates the distribution to other standard
distributions and connects the SNB tail probability to the binomial tail 
probability. Section 7 shows how to design a trial using the SNB in a prototypical setting.

\section{Probability Mass Function}
\label{notation.section}

Let $\,b_1, b_2, \ldots \,$ denote a sequence of independent, identically
distributed, Bernoulli random variables with $\mathbb{P}[b_i=1]=p$ and
$\mathbb{P}[b_i = 0] = 1-p$, for
probability parameter $0\leq p \leq 1$. In the clinical trial setting
$\,b_i = 1$ corresponds to the $i$th patient responding to treatment.  

Let $s$ and $t$ be positive integers.  Define the SNB random
variable $Y$ as the smallest
integer value such that $\,\{b_1, \ldots , b_Y\}\,$ contains {\em either}
$\,s\,$ responders {\em or} $\,t\,$ non-responders. That is, the SNB 
distribution of $Y$ is the smallest integer such that either
$\sum_i^Y b_i = s$ or $\sum_i^Y (1-b_i) = t$.

The sequence of Bernoulli random variables can be modeled as the process 
$\mathbf{X} = \left\{X_k : k = 0,1,... \right\}$
with $X_0=0$ and
\begin{equation*} \label{eqn:proc}
X_{k+1} = X_k + b_{k+1} \ \mathbbm{1}_{\{ k-t < X_k < s\}}.
\end{equation*}
where $\mathbbm{1}_{\{f\}}$ is the {\em indicator function}, taking the value 
of one if $f$ is true and zero otherwise, and
At each step a patient's outcome is measured. If it is a response, the response count increases, otherwise it stays the same. The process continues until either 
$X_k = s$ or $X_k = k-t$ corresponding to the success and failure boundaries

In Figure~\ref{fig:kane_viz} provides a graphical illustration of $X_k$ against
$k$ for one possible realization where $s=7$ and $t=11$. The vertical axis denotes the number of
successful outcomes. The vertical axis counts the number of responders observed.
The horizontal axis counts the number of patients 
enrolled. It is strictly increasing in sequence size and the failure boundary is tilted so that no more than $t$ (in this case 11) failure can occur. The horizontal and tilted boundaries represent the two
endpoints for the trial. In this case, the seventh response was reached on
the $15^{\text{th}}$ enrollment.
Since the success boundary is reached, we would reject the null hypothesis in this example.

We next derive the probability mass function of $Y$. The distribution of $\,Y\,$ has support on integer values in the range
\begin{equation*}               
     \min(s,t) \leq \; Y \;\leq s+t-1  \label{range.y.eq}.
\end{equation*}
In Proposition \ref{stopping_time} below, we show the probability mass function of $Y$ is
\begin{align} \label{eqn:pmf}
\mathbb{P} [Y=k] =& \ S(k, p, s) \ \mathbbm{1}_{\{s \leq k \leq s+t-1\}} \nonumber \\
  & + S(k, 1-p, t) \ \mathbbm{1}_{\{ t \leq k \leq s+t-1 \}}
\end{align}
where
\begin{equation} \label{eqn:N}
S(k, p, s) = {k-1 \choose s-1} p^s (1-p)^{k-s} 
\end{equation}
is a translated version of the negative binomial probability mass.

The negative binomial cumulative distribution function can be expressed in terms of the
{\em regularized incomplete beta function} \citep{Olver2010} 
\begin{equation*}
\mathcal{I}_{1-p}(l - s + 1, s) = \sum_{k=s}^l S(k, p, s) 
\end{equation*}
for all integer values $l\,$ satisfying $s \leq l \leq s+t-1$. It has property \citep{Uppuluri1967}
\begin{equation} \label{ribf}
\mathcal{I}_{1-p}(l-s+1, s) = 1 - \mathcal{I}_p(s, l-s+1).
\end{equation}

\begin{prop} \label{stopping_time}
The distribution of the stopping time
\begin{equation*}
Y = \argminl_k \left[X_k \geq s \cup X_k \leq k-t \right]
\end{equation*}
is given at (\ref{eqn:pmf}).
\end{prop}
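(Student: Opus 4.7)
\medskip
\noindent\textbf{Proof proposal.} The plan is to decompose the event $\{Y=k\}$ according to which boundary is hit at time $k$, and then recognize each piece as a negative binomial probability.

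First, I would observe that the two ways the process can stop at time $k$ are mutually exclusive. Indeed, the process stops at step $k$ either because $X_k = s$ (the success boundary) or because $X_k = k - t$ (the failure boundary). The former requires $b_k = 1$ and the latter requires $b_k = 0$, so the events
\begin{equation*}
A_k = \{Y=k,\; X_k = s\}, \qquad B_k = \{Y=k,\; X_k = k-t\}
\end{equation*}
are disjoint and $\{Y=k\} = A_k \cup B_k$. Thus $\mathbb{P}[Y=k] = \mathbb{P}[A_k] + \mathbb{P}[B_k]$.

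Next I would compute $\mathbb{P}[A_k]$. The event $A_k$ says that the $k$th trial is the $s$th success, and that the process was not stopped before time $k$. Since the counts of successes and failures are each nondecreasing in $k$, the ``not stopped earlier'' condition is equivalent to requiring that at time $k-1$ the number of successes is exactly $s-1$ (so the success boundary was not hit before) and the number of failures $k-s$ is strictly less than $t$ (so the failure boundary was not hit before). The first of these gives the negative binomial factor $\binom{k-1}{s-1}p^{s}(1-p)^{k-s} = S(k,p,s)$, while the second is equivalent to $k \le s+t-1$; together with the obvious $k \ge s$ this yields
\begin{equation*}
\mathbb{P}[A_k] = S(k,p,s)\, I_{\{s \le k \le s+t-1\}}.
\end{equation*}
By the same argument applied to the failure process (swap $p \leftrightarrow 1-p$ and $s \leftrightarrow t$), I get $\mathbb{P}[B_k] = S(k,1-p,t)\, I_{\{t \le k \le s+t-1\}}$. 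Summing yields (\ref{eqn:pmf}).

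The only step that needs a little care is the equivalence between ``no earlier stopping'' and the range condition on $k$. The point to make explicit is that monotonicity of the success and failure counts along a Bernoulli path means it suffices to check the counts at the single time $k-1$: if neither boundary has been reached at that instant, then neither has been reached at any earlier instant. Once this monotonicity observation is in place the rest is just reading off the negative binomial formula, so I do not anticipate a real obstacle.
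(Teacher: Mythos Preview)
Your argument is correct and follows the same route as the paper: decompose $\{Y=k\}$ by which boundary is hit, then identify each piece as a negative binomial probability with the range restriction coming from the ``not stopped earlier'' condition. The paper's proof additionally verifies directly that (\ref{eqn:pmf}) sums to one via the identity $\mathcal{I}_p(s,t)+\mathcal{I}_{1-p}(t,s)=1$ for the regularized incomplete beta function, but this check is logically redundant once your pointwise computation of $\mathbb{P}[Y=k]$ is in hand (since $Y\le s+t-1$ almost surely); your treatment of the monotonicity step is in fact more explicit than the paper's.
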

\begin{proof}

The endpoint $X_k = s$ can only occur if $X_{k-1} = s-1$ followed by a success. That is, 
\begin{align} \label{prop11}
\mathbb{P} [X_k =s ] \;&=\; p \, \mathbb{P} [X_{k-1} = s-1 ] \nonumber \\
 &=\; p\, {k-1 \choose s-1} p^{s-1} (1-p)^{k-s} \nonumber \\
 &=\; {k-1 \choose s-1} p^{s} (1-p)^{k-s}.
\end{align}
This expression is given
in (\ref{eqn:N}). 
Similarly, the probability a given realization reaches the endpoint $X_k = k-t$
satisfies 
\begin{align} \label{prop12}
\mathbb{P} [X_k = k-t ] \;&=\; (1-p) \, \mathbb{P} [X_{k-1} = k-t ] \nonumber \\
 &=\; (1-p)\, {k-1 \choose t-1} (1-p)^{t-1} p^{k-t} \nonumber \\
&=\; {k-1 \choose t-1} (1-p)^{t} p^{k-t}.
\end{align}
The result (\ref{eqn:pmf}) follows by summing (\ref{prop11}) and (\ref{prop12}).

To show (\ref{eqn:pmf}) sums to one, define
\begin{equation*} 
R = \sum_{k=s}^{s+t-1} S(k, p, s) + \sum_{k=t}^{s+t-1} S(k, 1-p, t).
\end{equation*}
Substitute $i=k-s$ in the first summation and $j=k-t$ in the second. Then
$R$ can be written as the cumulative distribution function (CDF) of two
negative binomial distributions:
\begin{align*}
R = \sum_{i=0}^{t-1} &{i+s-1 \choose i} p^s (1-p)^i +
\sum_{j=0}^{s-1}  {j+t-1 \choose j} p^j  (1-p)^t \\
   &= 1-\mathcal{I}_p(s, t) + 1 - \mathcal{I}_{1-p}(t, s) \\
   &= 1
\end{align*}
using (\ref{ribf}). This completes the proof that (\ref{eqn:pmf}) is the distribution 
of the stopping time and it is a valid probability mass function.
\end{proof}

We next consider an interim analysis of a clinical trial after $s'$ 
patients have responded to treatment 
and $t'$ failed to respond for $s' < s$ and $t' < t$.
\begin{corollary} \label{conditional_distribution}
The number of subsequent enrollments needed 
to reach either $s$ or $t$ endpoints behaves as SNB($p$, $s-s'$, $t-t'$).
\end{corollary}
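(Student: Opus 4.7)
The plan is to exploit the i.i.d.\ structure of the Bernoulli sequence $b_1, b_2, \ldots$ together with the fact that the stopping rule for the original trial depends only on running counts of successes and failures. Since these counts are additive, reaching $s$ successes or $t$ failures overall after the interim analysis is equivalent to reaching $s-s'$ additional successes or $t-t'$ additional failures in the trials that follow.

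Concretely, I would proceed as follows. First, let $N = s' + t'$ denote the number of patients enrolled at the interim analysis, so that by assumption $\sum_{i=1}^{N} b_i = s'$ and $\sum_{i=1}^{N}(1-b_i) = t'$. Define the shifted sequence $b'_j = b_{N+j}$ for $j \geq 1$. By independence of the original $b_i$, conditional on the event that the interim counts equal $(s', t')$, the sequence $\{b'_j\}_{j \geq 1}$ is again i.i.d.\ Bernoulli$(p)$. Second, let $Y$ be the total SNB stopping time and $Y' = Y - N$ the number of subsequent enrollments. The event $\{Y = N+k\}$ corresponds to the smallest $k$ such that $\sum_{j=1}^{k} b'_j = s - s'$ or $\sum_{j=1}^{k}(1-b'_j) = t - t'$, since the only way to first hit an overall boundary at time $N+k$ is to first hit the corresponding shifted boundary using the post-interim outcomes. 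Third, by the definition of the SNB given just before Proposition~1 applied to $\{b'_j\}$ with parameters $(p, s-s', t-t')$, this is exactly the SNB$(p, s-s', t-t')$ stopping time, so $Y'$ has the claimed distribution with mass function given by (\ref{eqn:pmf}) after replacing $(s,t)$ by $(s-s', t-t')$.

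The argument is essentially a bookkeeping exercise leveraging the strong Markov-style structure already implicit in the process $\mathbf{X}$ of Proposition~1. The one place where I would slow down is the step identifying $\{Y = N+k\}$ with the shifted first-passage event: I need to confirm that the interim counts $(s', t')$ satisfying $s' < s$ and $t' < t$ guarantee the original process has not already stopped at time $N$, so that the future trajectory really is governed by the shifted SNB rule rather than being truncated. Once that observation is made explicit, the result follows without any further computation, and in particular without re-summing the mass function, since Proposition~1 already verifies that the SNB PMF sums to one for any positive integer parameters.
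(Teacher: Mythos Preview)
Your proposal is correct and follows the same idea as the paper, which disposes of the corollary in a single sentence: having observed $s'$ responders and $t'$ non-responders, $s-s'$ more responders or $t-t'$ more non-responders are needed. Your write-up simply formalizes this remark via the shifted i.i.d.\ sequence $b'_j = b_{N+j}$, which is a welcome elaboration but not a different approach.
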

Having observed $s'$ responders and $t'$ non-responders, there are $s-s'$ 
additional responders needed to reach the success endpoint and $t-t'$ additional 
non-responders needed to reach the failure endpoint.

\section{The Moment Generating Function} \label{sect:mgf}

\begin{prop} Let $Y$ be distributed SNB with parameters $p$, $s$, and $t$.
Then the moment generating function (MGF) of $Y$ is
\begin{equation} \label{eqn:mgf}
\mathbb{E}~e^{xY} = \left(\frac{p e^x}{1 - qe^x}\right)^s 
  \mathcal{I}_{1-qe^x} (s, t) + \left(\frac{qe^x}{1-pe^x}\right)^t 
  \mathcal{I}_{1-pe^x}(t, s)
\end{equation}
for $q = 1-p$ and is defined for 
$x < \min \left\{\log(1/p), \log(1/q) \right\}$.
\end{prop}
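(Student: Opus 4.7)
The plan is to compute the MGF directly from its definition, $\mathbb{E}\,e^{xY} = \sum_k e^{xk}\,\mathbb{P}[Y=k]$, using the PMF in (\ref{eqn:pmf}). Because that PMF is the sum of two indicator-supported negative-binomial terms, the MGF breaks into a pair of finite sums, one over $k \in \{s,\ldots,s+t-1\}$ and one over $k \in \{t,\ldots,s+t-1\}$, which I would handle separately and symmetrically.

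For the first sum $\sum_{k=s}^{s+t-1} e^{xk}\binom{k-1}{s-1} p^s q^{k-s}$, I would substitute $i = k-s$ to obtain $(pe^x)^s \sum_{i=0}^{t-1}\binom{i+s-1}{i}(qe^x)^i$. The key algebraic move is to set $\alpha = 1-qe^x$, so that $qe^x = 1-\alpha$, and then multiply and divide by $\alpha^s$ to recover the truncated negative-binomial mass $\sum_{i=0}^{t-1}\binom{i+s-1}{i}\alpha^s(1-\alpha)^i$. By the same identity invoked in the proof of the PMF proposition, this partial sum equals the regularized incomplete beta function $\mathcal{I}_{\alpha}(s,t) = \mathcal{I}_{1-qe^x}(s,t)$. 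Re-assembling the prefactors produces $\bigl(pe^x/(1-qe^x)\bigr)^s \mathcal{I}_{1-qe^x}(s,t)$. For the substitution to yield $\alpha \in (0,1)$ we need $0 < qe^x < 1$, which is exactly the restriction $x < \log(1/q)$.

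The second sum is treated by the same argument after interchanging the roles of $p$ with $q$ and of $s$ with $t$, using the substitution $j = k-t$; it evaluates to $\bigl(qe^x/(1-pe^x)\bigr)^t \mathcal{I}_{1-pe^x}(t,s)$ under the condition $x < \log(1/p)$. Adding the two contributions reproduces (\ref{eqn:mgf}), and the stated joint domain $x < \min\{\log(1/p),\log(1/q)\}$ is just the intersection of the two separate restrictions.

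I expect the main obstacle to be bookkeeping rather than anything deep: one has to keep the argument order in $\mathcal{I}_{\alpha}(a,b)$ straight and recognize that it is $\alpha = 1-qe^x$, and not $qe^x$ itself, that plays the role of the ``probability'' in the negative-binomial identity, so that the exponent structure in (\ref{eqn:mgf}) comes out correctly. Once the correct substitution $1-\alpha = qe^x$ is identified, the calculation is essentially a formal mirror of the sum-to-one verification used earlier in the paper.
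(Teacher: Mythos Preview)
Your proposal is correct and follows essentially the same route as the paper: split the MGF into the two finite negative-binomial sums from the PMF, factor out $(pe^x/(1-qe^x))^s$ (respectively $(qe^x/(1-pe^x))^t$), and recognize the remaining truncated sum as the regularized incomplete beta function $\mathcal{I}_{1-qe^x}(s,t)$ (respectively $\mathcal{I}_{1-pe^x}(t,s)$), with the domain restriction coming from $0<qe^x<1$ and $0<pe^x<1$. Your explicit substitutions $i=k-s$, $\alpha=1-qe^x$ make the bookkeeping slightly more transparent than the paper's version, but the argument is the same.
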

The moment generating function for the SNB is calculated in a manner similar to 
that of two negative binomial distributions. Appendix 1 provides a proof for the derivation.

\begin{prop}
The MGF of the SNB converges to that of the negative binomial when either
$s$ or $t$ gets large. That is
\begin{equation*}
lim_{t\to\infty} \ \mathbb{E} e^{xY} = \left( \frac{pe^x}{1-qe^x} \right)^s
\end{equation*}
as. The analogous result holds when $s \rightarrow \infty$.
\end{prop}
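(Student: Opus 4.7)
The plan is to work directly from the closed form in~(\ref{eqn:mgf}) and analyze each of the two summands separately as $t\to\infty$ with $s$ and $x$ held fixed; the case $s\to\infty$ then follows by the evident symmetry of~(\ref{eqn:mgf}) under interchange of $(p,s)$ and $(q,t)$. Because $s$ never appears as an exponent involving $t$, only the $t$-dependence of the incomplete beta factor $\mathcal{I}_{1-qe^x}(s,t)$ in the first summand and of the entire second summand needs to be tracked.

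First, I would show the first summand converges to the target. It is convenient to go back to the finite-sum form used in deriving~(\ref{eqn:mgf}), namely
\begin{equation*}
\left(\frac{pe^x}{1-qe^x}\right)^s\!\mathcal{I}_{1-qe^x}(s,t)
 \;=\; \sum_{k=s}^{s+t-1}\binom{k-1}{s-1}(pe^x)^s(qe^x)^{k-s}.
\end{equation*}
Since the MGF is defined only for $qe^x<1$, the generating identity $\sum_{k=s}^{\infty}\binom{k-1}{s-1}z^{k-s}=(1-z)^{-s}$ with $z=qe^x$ shows the infinite series converges to $\bigl(pe^x/(1-qe^x)\bigr)^s$. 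Letting $t\to\infty$ turns the partial sum into the full series, which proves both that the first summand tends to the stated NB MGF and, equivalently, that $\mathcal{I}_{1-qe^x}(s,t)\to 1$.

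Second, I would show the second summand vanishes. Expanding the incomplete beta factor in the same spirit gives
\begin{equation*}
\left(\frac{qe^x}{1-pe^x}\right)^t\!\mathcal{I}_{1-pe^x}(t,s)
 \;=\; (qe^x)^t\sum_{j=0}^{s-1}\binom{j+t-1}{j}(pe^x)^j.
\end{equation*}
For fixed $s$, the sum on the right is a polynomial in $t$ of degree $s-1$, while the prefactor $(qe^x)^t$ decays exponentially since $qe^x<1$. Exponential decay dominates polynomial growth, so the whole expression tends to zero, and combining the two limits yields the statement.

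The step I expect to be the main obstacle is controlling this second summand: the naive bound $\mathcal{I}_{1-pe^x}(t,s)\le 1$ combined with the factor $\bigl(qe^x/(1-pe^x)\bigr)^t$ is useless when $x>0$, because the ratio $qe^x/(1-pe^x)$ can then exceed one (indeed $qe^x+pe^x=e^x>1$ for positive $x$). Unfolding the incomplete beta back into its defining finite sum is what dissolves the problematic denominator and exposes the decisive exponential factor $(qe^x)^t$, so this bit of algebraic bookkeeping is where the argument requires the most care; after it, the $s\to\infty$ case is merely a relabeling.
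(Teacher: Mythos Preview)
Your argument is correct and takes a genuinely different route from the paper's. The paper does not unfold the incomplete beta factors back into finite sums; instead it rewrites $\mathcal{I}_{1-pe^x}(t,s)$ as a binomial tail probability $\mathbb{P}[B\le s-1]$ and bounds that tail by Chebyshev's inequality to conclude $\mathcal{I}_{1-pe^x}(t,s)\to 0$, then asserts $0<qe^x/(1-pe^x)<1$ on the whole domain of the MGF so that the prefactor $\bigl(qe^x/(1-pe^x)\bigr)^t$ stays bounded. Your approach is more elementary---no appeal to Chebyshev, just series manipulation---and, crucially, it sidesteps exactly the issue you flagged: the paper's claimed inequality $qe^x/(1-pe^x)<1$ is equivalent to $e^x<1$ and therefore fails for every $x\in\bigl(0,\min\{\log(1/p),\log(1/q)\}\bigr)$. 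Since Chebyshev only gives $\mathcal{I}_{1-pe^x}(t,s)=O(1/t)$, the paper's decomposition leaves an exponentially growing factor multiplied by a polynomially decaying one when $x>0$, which is not enough. Your recombination of the two pieces into $(qe^x)^t$ times a degree-$(s-1)$ polynomial in $t$ is what makes the second summand genuinely vanish on the full MGF domain, so your version is not merely an alternative but a repair.
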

\begin{proof}
The second incomplete beta function in (\ref{eqn:mgf}) can be written
in terms of a cumulative binomial distribution
\begin{equation*}
\mathcal{I}_{1-pe^x}(t, s) = \mathbb{P}\left[ B \leq s-1 \right]
\end{equation*}
where $B$ is distributed as
Binomial($t-k$, $pe^x$). From Chebychev's inequality 
it follows that
\begin{equation} \label{eqn:hoeffding}
\mathbb{P}\left[ B \leq s-1 \right] \leq 
  \frac{ (t-k) pe^x (1-pe^x) }{ \left(s - (t-k)pe^x\right)^2 }
\end{equation}
As $t$ gets large $\mathcal{I}_{1-pe^x}(t, s)$ tends to zero
and $\mathcal{I}_{1-qe^x}(s, t)$ approaches 
one. The proof follows by realizing 
\begin{equation*}
0 < \frac{qe^x}{1-pe^x} < 1
\end{equation*}
over the support of $x$.
\end{proof}

\section{The Likelihood Function} \label{sect:likelihood}

In this section we derive the likelihood function for the SNB for a single trial. In early-stage clinical trial only a single trial is performed, usually because of resource constraints, and the object of interest is $p$ parameter, which determines if a therapy will be delivered to the market. A multi-sample version is less relevant for this use case but is represented as a product of mixtures of Beta distributions. Deriving it's theoretical characteristics not straight-forward. 

Let $Y$ be distributed SNB($p, s, t$). Then, the likelihood of $Y$ is proportional to a mixture of Beta distributions.
\begin{equation*} \label{eqn:likelihood}
\mathcal{L} (p | s, t, Y) = B_1 \, \mathbbm{1}_{\{s \leq Y_1\}} + B_2 \, \mathbbm{1}_{\{t \leq Y_1 \}}
\end{equation*}
where $B_1 =$ Beta$\left(s, Y - s\right)$ and $B_2 =$ Beta$\left(Y - t, t\right)$ and
\begin{equation*}
\text{Beta}\left( \alpha, \beta \right) = \frac{\Gamma(\alpha+\beta)}{\Gamma(\alpha)\, \Gamma(\beta)} 
p^{\alpha-1} (1-p)^{\beta-1}.
\end{equation*}

\begin{prop} \label{mode_order}
The mode of $B_1$ occurs at a value of $p$ greater than that of $B_2$ in the likelihood function. 
\end{prop}
\begin{proof}
The mode of the Beta($\alpha,\, \beta$) distribution is $(\alpha-1) / (\alpha+\beta-2)$. Plugging in the shape parameters of $B_1$ and $B_2$ into the expression of the mode, the proposition is equivalent to showing
\begin{equation*}
\frac{s-1}{Y - 2} > \frac{Y - t- 1}{Y -2 }
\end{equation*}
which is true when $s > Y - t$. The maximum value of the right hand of the inequality occurs when $Y_1 = s+t-1$ and the inequality is equivalent to $s > s-1$, which is true.
\end{proof}
\begin{prop}
The difference between the modes of $B_1$ and $B_2$ is bounded by
\begin{equation*}
\textrm{MODE}(B_1) - \textrm{MODE}(B_2) 
\; \geq \; \frac{1}{s+t-3}.
\end{equation*}
\begin{proof}
Proposition \ref{mode_order} shows that the mode of $B_1$ is greater than that of $B_2$. The difference
between the two can be expressed as
\begin{equation*}
\frac{s-1}{Y-2} - \frac{Y-t-1}{Y-2} = \frac{s+t-Y}{Y-2}.
\end{equation*}
This function is strictly increasing in $Y$ over its support and obtains its minimum at $s+t-1$. The result follows.
\end{proof}
\end{prop}

As an example, the likelihood function for $Y = 7, 11, 13$ and 17 is shown in
Figure~\ref{likelihood.fig}. When $Y = 7$ the trial must have ended in success and the likelihood function concentrates near 1. The success and
failure endpoints can be reached for any value of $Y \geq 11$. When 
$Y = 11$ we see a bimodal likelihood function which one mode, at $p=0.6$
provided by the success endpoint and the other, at $p=0$, from the
failure endpoint where no responses are observed. Similarly, when $Y_1=13$
we see contributions from both the success and failure endpoints but
the two modes are converging. At $Y=17$ the endpoints contributed likelihoods with similar modes and the result is unimodal.

After an endpoint has been reached, the resulting conditional likelihood is either $B_1$ or $B_2$, depending if the trial was a success or failure. However, when the endpoint is not known, such as the planning phase of a trial, unintuitive situations may arise. Since the likelihood is bimodal, there are even settings where we may reject the null despite a poorer alternative. Suppose in the hypothetical trial $p_0=0.25$ and $p_1=0.5$, the outcome is unknown, and the trial completes after 11 enrollees, as shown in Figure~\ref{likelihood.fig} labeled $Y=11$. Since the null is at a ``trough'' in the likelihood we fail to reject even though it is closer to the most likely value of $p=0$. 

\section{The Posterior and Predictive Probability Distribution} \label{bayesian_extension}

Consider the Bayesian setting where $Y$ is an SNB($P$, $s$, $t$) distribution and the rate parameter, $P$ 
is distributed as Beta($\alpha$, $\beta$). The posterior distribution $P|Y$ is proportional to the 
likelihood, given by the function
\begin{align} \label{eqn:ptl}
f_{P|Y}(p, k, s, t, \alpha, \beta) \; \propto \ &
  \ \frac{ {k-1 \choose s-1} }{B(\alpha, \beta)} p^{\alpha +s -1} 
    (1-p)^{k+\beta-s-1} \\
  & + \frac{ {k-1 \choose t-1} }{B(\alpha, \beta)} p^{k+\alpha -t -1} 
    (1-p)^{\beta+t-1} \nonumber
\end{align}
where $0 \leq p \leq 1$ and $\min(s, t) \leq k \leq s+k-1$.

The predictive distribution of the SNB can be found as by integrating
$p$ over the interval zero to one and applying the definition of the 
beta function.
\begin{align} \label{eqn:predictive}
f_{Y}(&k, s, t, \alpha, \beta) = 
  \int_0^1 f_P(p | \alpha, \beta )  f_{Y|P}(p, k, s, t) dp \\ 
  = & {k-1 \choose s-1} \frac{B\left(\alpha+s, k-s+\beta \right)}{B(\alpha, \beta)} 
     \nonumber \\
  & + {k-1 \choose t-1} 
    \frac{B\left(\alpha + k - t, t+\beta\right)}{B(\alpha, \beta)} \nonumber
\end{align}
If both $\alpha$ and $\beta$ are non-negative integers then the predictive
distribution is a mixture of negative-hypergeometric distributions. 
\begin{align*} \label{eqn:hypergeo}
f_{Y}(k, s, t, \alpha, \beta) = & 
  \frac{ {k - 1 \choose s - 1 } 
         {\alpha + \beta \choose \alpha} }{
         {\alpha + \beta + k - 1 \choose \alpha + s}}
  \frac{\alpha}{\alpha + \beta}
  \frac{ \beta }{ k-s+\beta } \\ & +
  \frac{ {k - 1 \choose t - 1} 
         {\alpha + \beta \choose \beta} 
         }{
         {\alpha + \beta + k -1 \choose t + \beta}
       }
  \frac{ \beta }{ \alpha + \beta}
  \frac{ \alpha }{ k-t + \alpha}
\end{align*}
The ratio of combinations in the first term can be interpreted as the
probability of $s-1$ responders from $k-1$ patients in $\alpha + s$ draws
from a population size of $\alpha + \beta + k - 1$. This value is multiplied
by $\alpha / (\alpha + \beta)$, the expected response rate of the prior. 
The final term in the product weights the prior based on the number
of non-responders ($k-s$). Terms in the second summand are interpreted similarly
for non-responders.

The ratio of (\ref{eqn:ptl}) divided by (\ref{eqn:predictive}) gives the 
posterior distribution of $P$. It is a mixture of beta distributions. The
mixing parameter depend on the endpoints ($s$ and $t$), the number of enrollees needed to reach an endpoint ($k$), and the prior parameters ($\alpha$ and
$\beta$).

\section{Connections and Approximations to Other Distributions}

Examples of different 
shapes of the SNB are shown in in Figure~\ref{shapes.fig} varying parameters $p$, 
$s$, and $t$. The SNB distribution is a generalization of the negative 
binomial. As a result, the SNB can approximate other distributions in the same
manner as the negative binomial. When $t$ is large then $Y-s$ has a 
negative binomial distribution with
\begin{equation*}                                    
\mathbb{P}[Y=s+j ]        \label{nb1.eq}          
  = {{s+j-1}\choose{s-1}} p^s (1-p)^j
\end{equation*}
for $\,j=0, 1,\ldots\,$. A similar statement can be made when $s$ is large
and $t$ is moderate. As a result, with proper parameter choice, the SNB
can mimic other probability distributions in a manner similar to 
those described in \cite{Best1974} and \cite{Peizer1968}. As a generalization of the negative binomial distribution the 
SNB inherits the ability to approximate other distributions. For example,
when $s=1$ and $t \rightarrow \infty$, the SNB($p$, $s$, $t$) converges
to an negative binomial distribution with index parameter $s$ and rate parameter $p$. When $s=1$, this is the geometric distribution. The connection between the negative binomial and the gamma distribution
are well-studied in the literature (see \cite{Best1974,Ord1968,Guenther1972} for examples) as well the connection to the Poisson \cite{Anscombe1950}.

The SNB generalizes both the minimum (riff-shuffle) and maximum negative
binomial distributions up to a translation of the support.
For the special case of $\,s=t,$ the SNB distribution is the
maximum negative binomial \cite{Johnson2005,Zelterman2005,Zhang2000} - 
the smallest number of outcomes necessary to 
observe at least $s$ responders {\em and} $s$ non-responders. This is equivalent to a 
translated version of the riff-shuffle or minimum negative binomial 
distribution~\citep{Johnson2005,Uppuluri1967}.

There is also an equivalence between the probability of reaching an endpoint 
in the SNB model and the tail probability of the binomial distribution.
Specifically, the probability the number of responders is at least 
$s$ in the binomial model is the same as the probability the treatment succeeds 
(reaches $s$) in the SNB model.
\begin{prop} \label{binomial_tail}
Let $Y$ be distributed as SNB($p$, $s$, $t$) and let $X_Y$ correspond
to the number of responders at the end of the trial. Let 
$B$ be distributed binomial with index parameter $n=s+t-1$ and response 
probability $p$. Then
\begin{equation}
\mathbb{P}[B \geq s] = \mathbb{P} [X_Y = s].
\end{equation}
\end{prop}
\begin{proof}
The binomial tail probability is
\begin{equation*}
\mathbb{P}[B \geq s] = 1 - \mathcal{I}_{1-p}(s, t)
\end{equation*}
The corresponding success probability is
\begin{equation} \label{eqn:success}
\mathbb{P} [X_Y = s] 
  = \sum_{k=s}^{s+t-1} {k-1 \choose s-1} p^s (1-p)^{k-s}.
\end{equation}
Let $i=k-s$. Since
\begin{equation*}
{i+s-1 \choose s-1} = {i+s-1 \choose i},
\end{equation*}
the summation in (\ref{eqn:success}) can be rewritten as
\begin{align*}
\mathbb{P} [X_Y = s] &= \sum_{i=0}^{t-1} {i+s-1 \choose i} p^s (1-p)^i\\
  &= 1 - \mathcal{I}_{1-p}(t, s)
\end{align*}
completing the proof.
\end{proof}

To illustrate this result, let us return to our initial example
where $s=7$, $t=11$, and $p=0.2$.  The probability masses in
Figure~\ref{fig:snb_bin_compare} represented in 
black are equal in panels (a) and (b) as are the masses in grey.
The probability $s$
responders are reached in the SNB process is the same as the binomial 
probability of at least seven responders. Likewise, the probability $t$ 
non-responders reached in the SNB process is the same as the binomial
probability of zero through six responders.

\section{Trial Design with the SNB}

Consider the problem of designing a trial using curtailed sampling and the SNB. Assume that the maximum number of patients $n$ is given along with the null and alternative hypotheses. In this case, the parameter $s$ tells how many responses are needed to reach the success endpoint, can vary between 1 and $n-1$. For fixed $s$ and $n$  the value of $t$ is determined by the relation $t=n-s+1$. The ROC curves for all trial designs in the prototype, where $n=17$, $p=0.2$ under the null, and $p=0.4$ under the alternative, are shown in Figure~\ref{fig:all_sample_sizes_power_vs_signif}. In designing these trials, small significance and large power values are attained by either increasing the value of $n$ or increasing the difference between the alternative and null response rates. Figure~\ref{fig:all_sample_sizes} shows the expected number of enrollees for each of these trials as a function of the number of responses required to reach the success endpoint.

The expected number of enrollees for each trial is shown in Figure~\ref{fig:all_sample_sizes}. The curve reaches its maximum of 15 patients when $s=5$. The prototype design ($p=0.2$, $s=7$, and $t=11$) has an expected sample size of 14 enrollees.

\section*{Acknowledgements}
This research was supported by grants R01CA131301, 
R01CA157749, R01CA148996, R01CA168733, and PC50CA196530 awarded by the 
National Cancer Institute along with support from the Yale Comprehensive Cancer 
Center and the Yale Center for Outcomes Research. We would also like 
to thank Rick Landin at LaJolla Pharmaceutical for his suggestions.

\section*{APPENDIX: PROOF OF PROPOSITION 2}
The MGF of the SNB is, by definition:
\begin{align*}
\mathbb{E}~e^{xY} = & \sum_{k=s}^{s+t-1} {k-1 \choose s-1} p^s q^{k-s} e^{kx} \\
  & + \sum_{k=t}^{s+t-1} {k-1 \choose t-1} p^{k-t} q^t e^{kx}
\end{align*}
and can be rewritten as:
\begin{align} \label{eqn:first_sum}
\mathbb{E}~e^{xY} = & \sum_{k=s}^{s+t-1}{k-1 \choose s-1} (pe^x)^{s} (qe^x)^{k-s} \nonumber \\
  & + \sum_{k=t}^{s+t-1}{k-1 \choose t-1} (qe^x)^t (pe^x)^{k-t}.
\end{align}
The first summation in (\ref{eqn:first_sum}) satisfies
\begin{align*}
\sum_{k=s}^{s+t-1} &{k-1 \choose s-1}  (pe)^{sx} (qe^x)^{k-s}  \\
  & = \left(\frac{pe^x}{1 - qe^x}\right)^s \ \ \sum_{k=s}^{s+t-1} {k-1 \choose s-1} 
    (qe^x)^{k-s} (1-qe^x)^s \\
  & = \left(\frac{pe^x}{1 - qe^x}\right)^s \mathcal{I}_{1-qe^x}(s, t).
\end{align*}
Since the $p$ parameter in $\mathcal{I}_p$ has domain zero 
to one, we have $0 \leq pe^x < 1$. This implies $x < -\log(p)$.
A similar expression can be derived for the second summation in 
(\ref{eqn:first_sum}) and results in
the constraint $x < -\log(1-p)$.

\bibliographystyle{acm}
\bibliography{mybibfile}

\begin{thebibliography}{10}

\bibitem{Anscombe1950}
{\sc Anscombe, F.~J.}
\newblock Sampling theory of the negative binomial and logarithmic series
  distributions.
\newblock {\em Biometrika 37}, 3/4 (1950), 358--382.

\bibitem{Best1974}
{\sc Best, D., and Gipps, P.}
\newblock An improved gamma approximation to the negative binomial.
\newblock {\em Technometrics 16}, 4 (1974), 621--624.

\bibitem{Guenther1972}
{\sc Guenther, W.~C.}
\newblock A simple approximation to the negative binomial (and regular
  binomial).
\newblock {\em Technometrics 14}, 2 (1972), 385--389.

\bibitem{Johnson2005}
{\sc Johnson, N.~L., Kemp, A.~W., and Kotz, S.}
\newblock {\em Univariate Discrete Distributions}, vol.~444.
\newblock John Wiley \& Sons, 2005.

\bibitem{Olver2010}
{\sc Olver, F.~W.}
\newblock {\em NIST handbook of mathematical functions}.
\newblock Cambridge University Press, 2010, ch.~8, pp.~189--190.

\bibitem{Ord1968}
{\sc Ord, J.}
\newblock Approximations to distribution functions which are hypergeometric
  series.
\newblock {\em Biometrika 55}, 1 (1968), 243--248.

\bibitem{Peizer1968}
{\sc Peizer, D.~B., and Pratt, J.~W.}
\newblock A normal approximation for binomial, f, beta, and other common,
  related tail probabilities, i.
\newblock {\em Journal of the American Statistical Association 63}, 324 (1968),
  1416--1456.

\bibitem{Uppuluri1967}
{\sc Uppuluri, V., and Blot, W.}
\newblock Probability distribution arising in a riff-shuffle.
\newblock Tech. rep., Oak Ridge National Lab., Tenn., 1967.

\bibitem{Zelterman2005}
{\sc Zelterman, D.}
\newblock {\em Discrete distributions: applications in the health sciences}.
\newblock John Wiley \& Sons, 2005.

\bibitem{Zhang2000}
{\sc Zhang, Z., Burtness, B.~A., and Zelterman, D.}
\newblock The maximum negative binomial distribution.
\newblock {\em Journal of Statistical Planning and Inference 87}, 1 (2000),
  1--19.

\end{thebibliography}

\pagebreak

\onecolumn 

\section*{Figures}

\begin{figure}[htbp!]
\centering
\includegraphics[width=\textwidth]{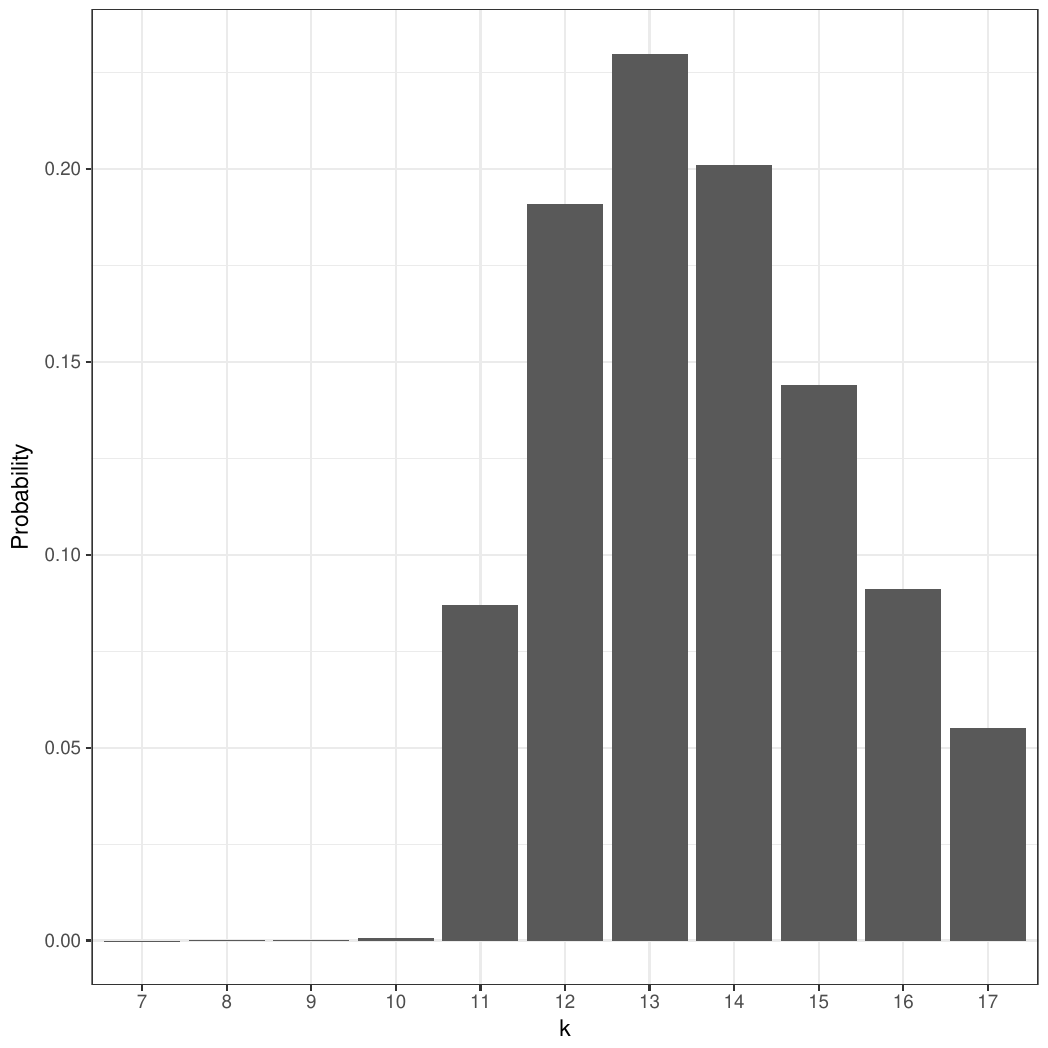}
\caption{
The distribution of the sample size in a trial that stops after either seven patients
respond to treatment or 11 do not when $p=0.2$.
}
\label{fig:kane_viz_1}
\end{figure}

\pagebreak

\begin{figure}[htbp!]
\centering
\includegraphics[width=\textwidth]{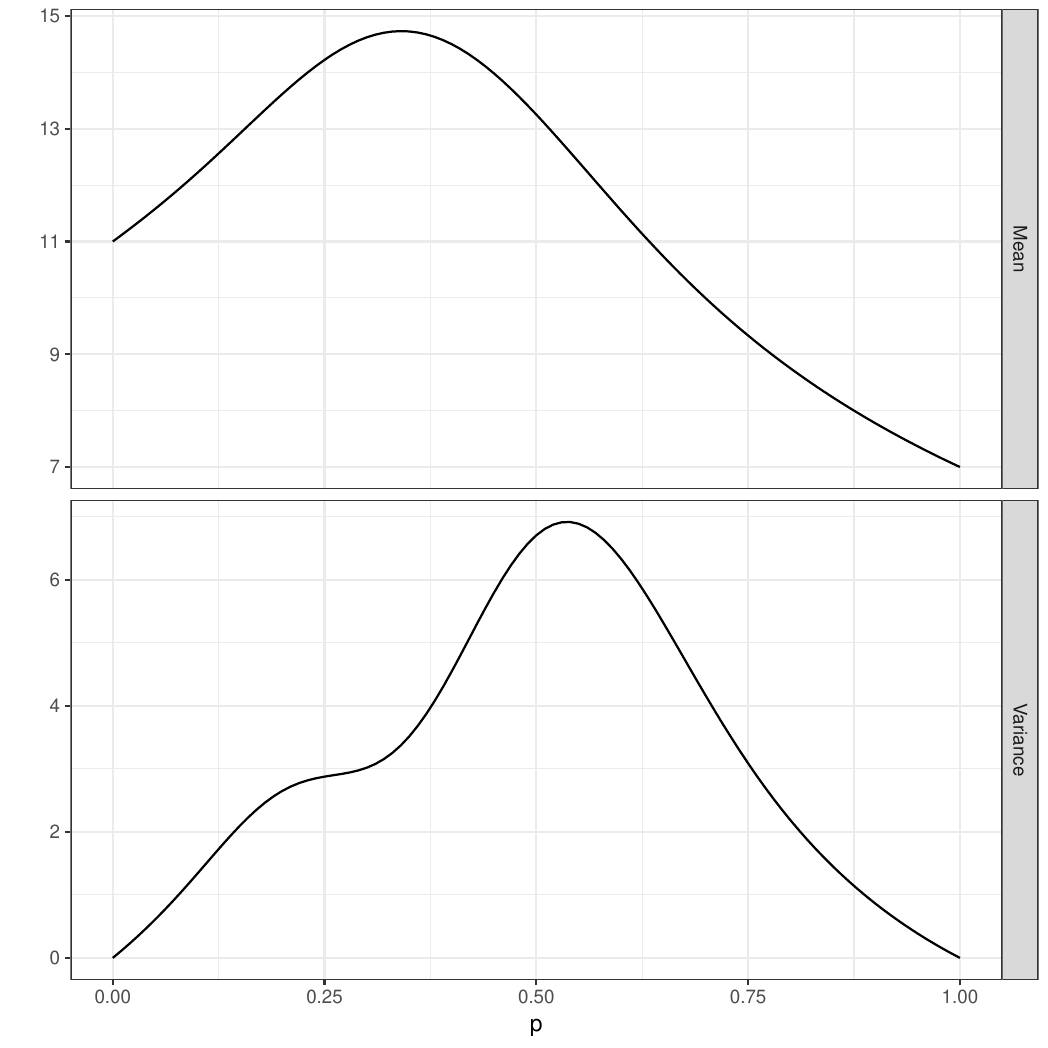}
\caption{
The mean and variance of the sample size for the probability of success $p$ 
when the trial stops after $s=7$ patients respond or $t=11$ fail to respond.
}
\label{fig:kane_viz_2}
\end{figure}

\pagebreak

\begin{figure}[htbp!]
\centering
\includegraphics[width=\textwidth]{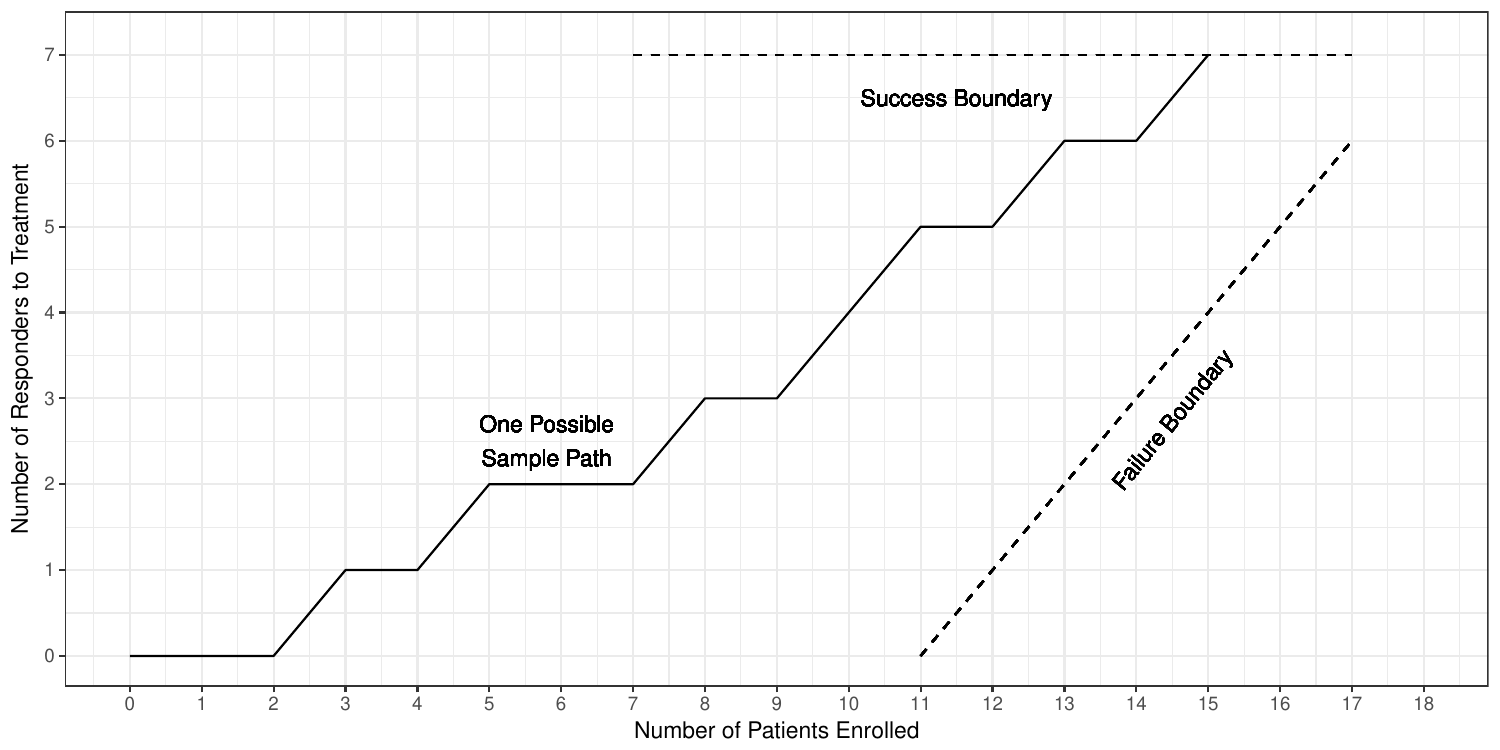}
\caption{
A hypothetical realization of the trial.
}
\label{fig:kane_viz}
\end{figure}

\pagebreak

\begin{figure}[htbp!]
\begin{center}
\includegraphics[width=\textwidth]{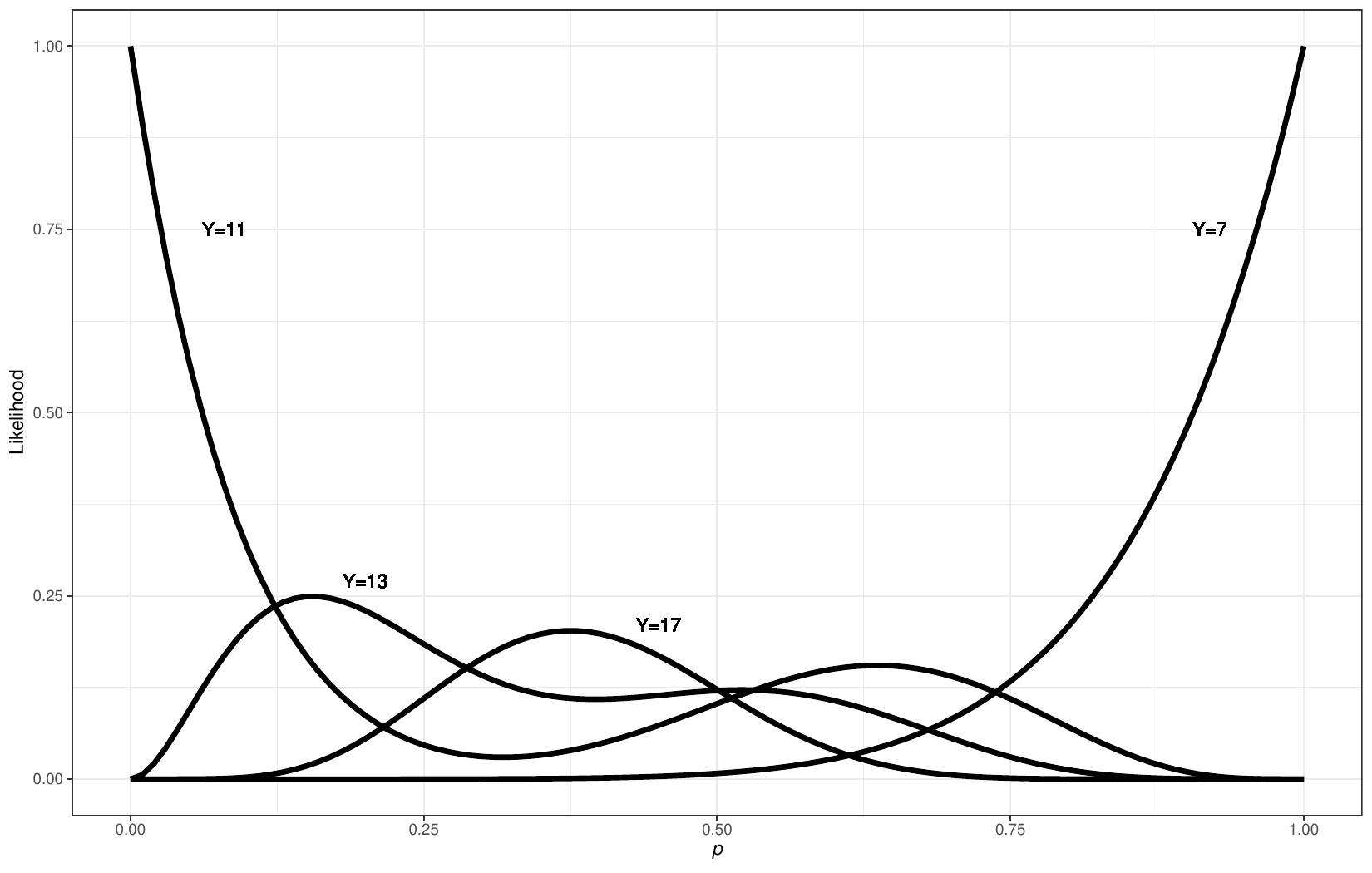}
\end{center}
\caption{Shapes of the likelihood function for given values of $Y$ with 
$s=7$ and $t=11$.
\label{likelihood.fig}}
\end{figure}

\pagebreak

\begin{figure}[htbp!]
\begin{center}
\includegraphics[width=0.8\textwidth]{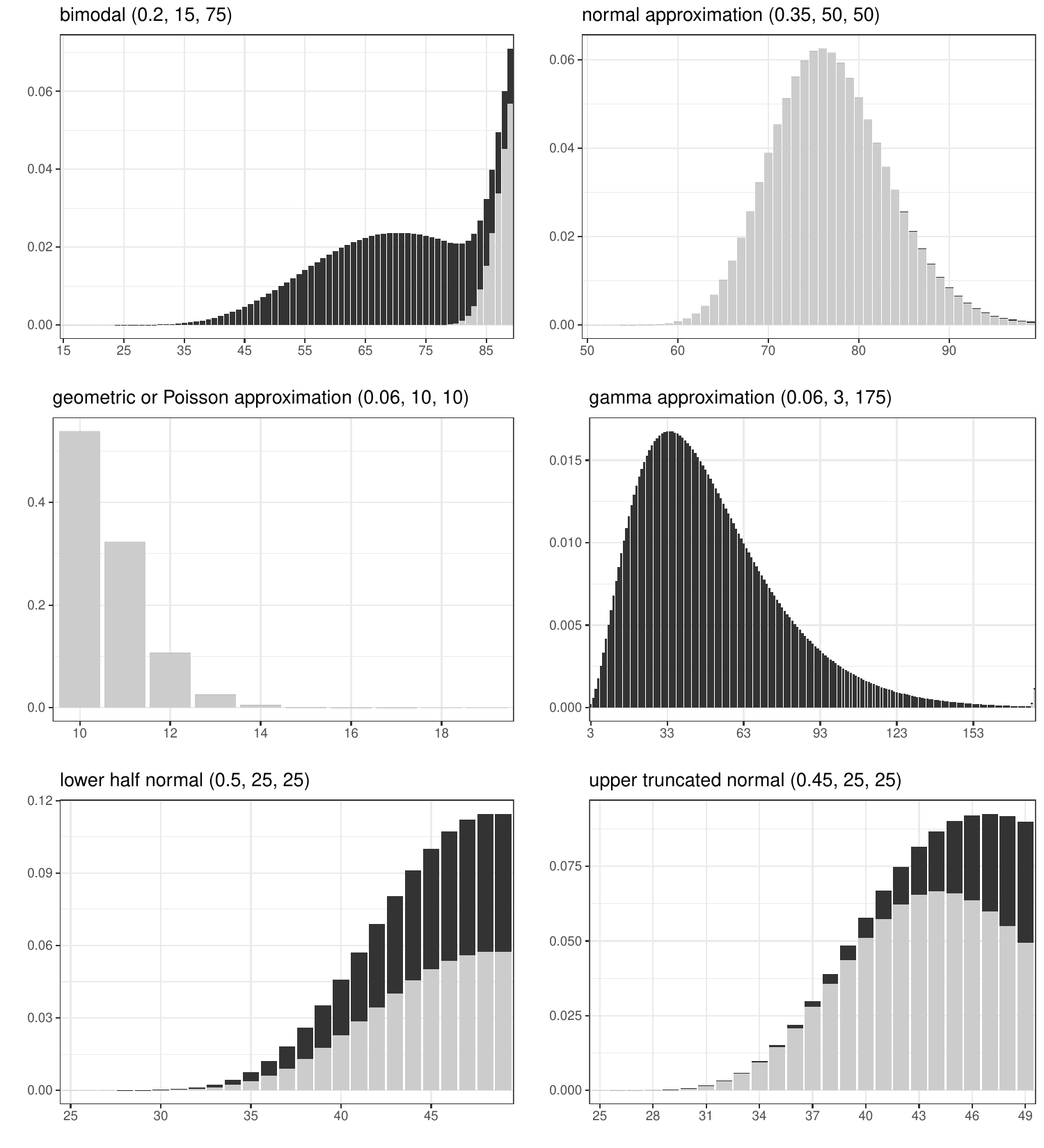}
\end{center}
\caption{Different shapes of the SNB mass function with parameters 
($p$, $s$, $t$) given. Black areas indicate the mass contributed by reaching
$s$ responders before $t$ non-responders. Grey indicates
mass contributed by reaching $t$ non-responders first. \label{shapes.fig}}
\end{figure}

\pagebreak

\begin{figure}[htbp!]
\centering
\subfloat[SNB distribution]{\includegraphics[width=0.5\textwidth]{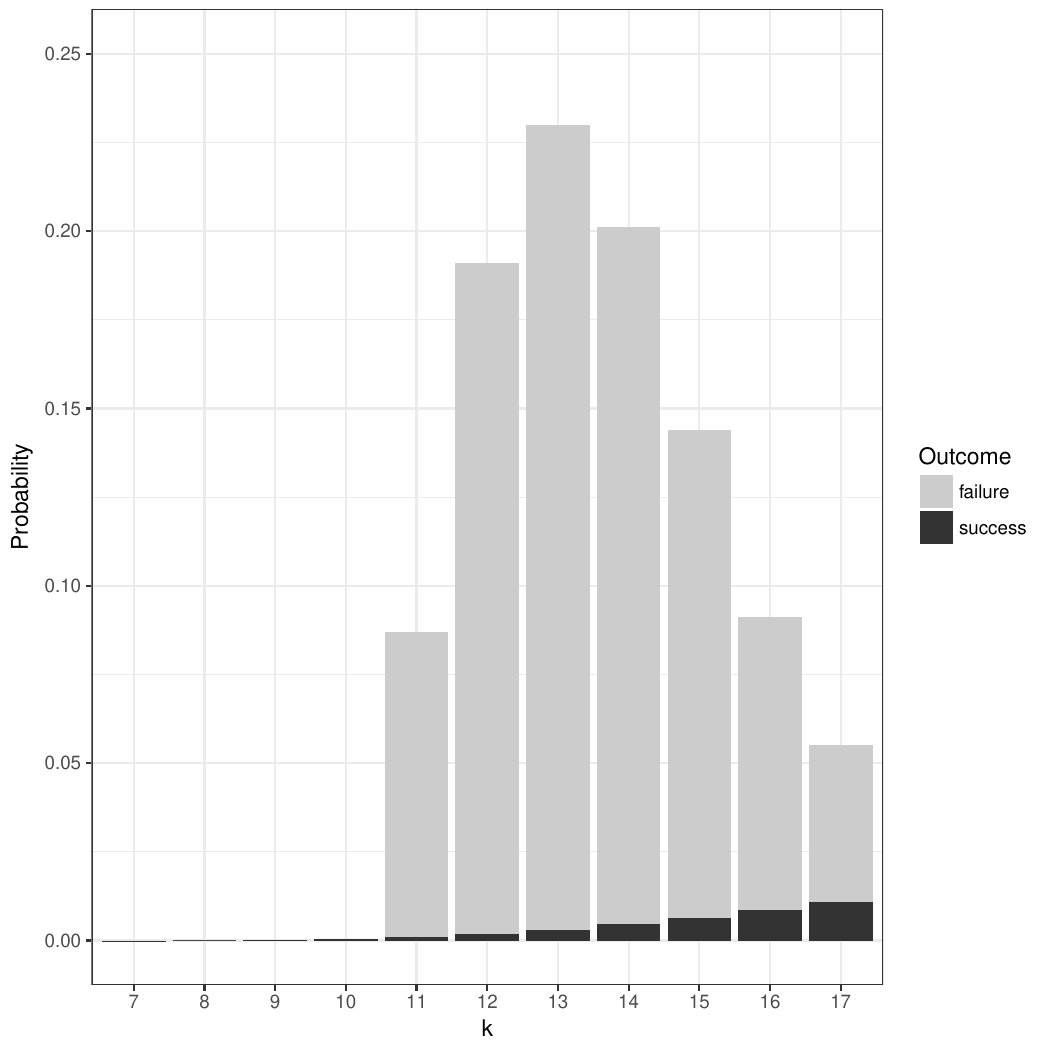}}
\hfill
\subfloat[binomial distribution]{\includegraphics[width=0.5\textwidth]{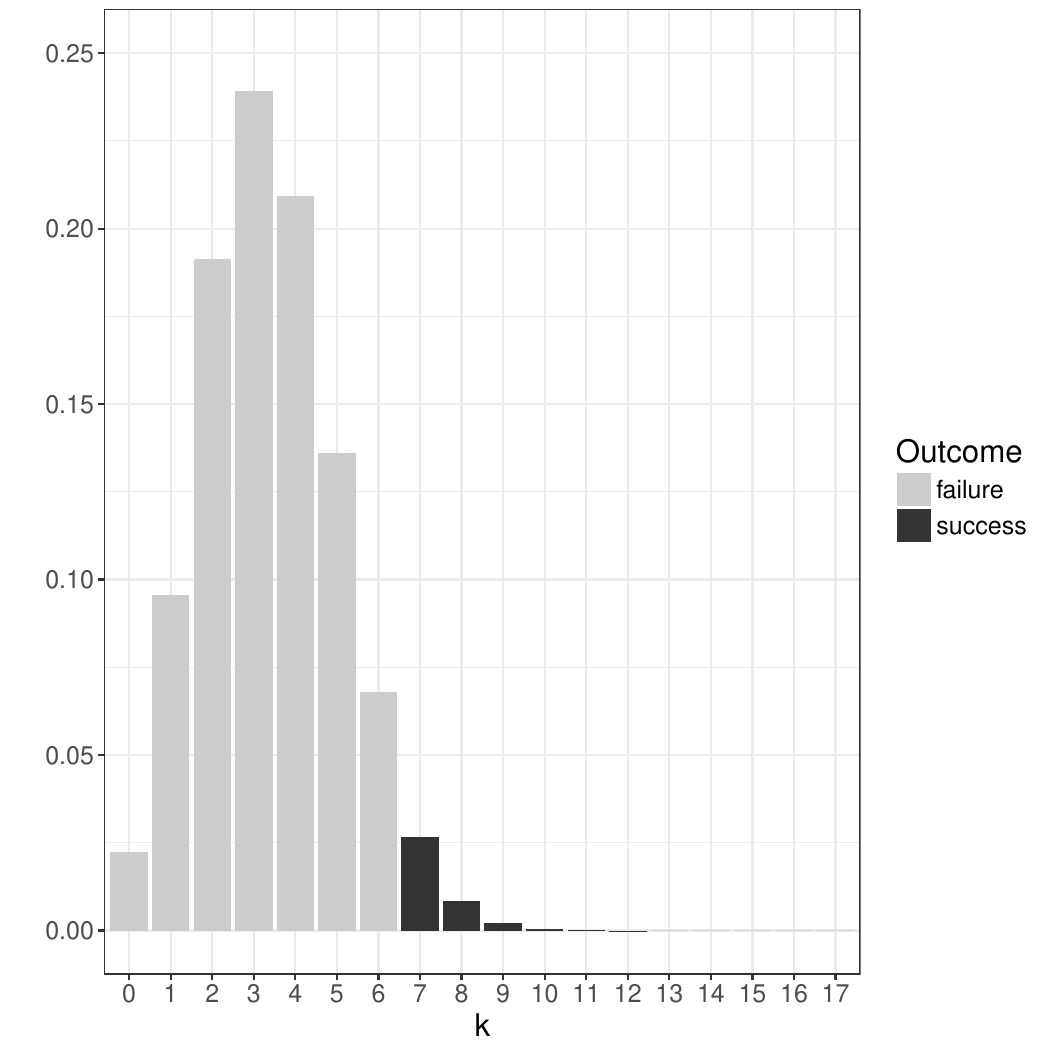}}
\caption{
SNB(0.2, 7, 11) with mass contributed from 
7 responders (black) or 11 non-responders (grey) along with 
Binomial(0.2, 17) with at least 2 responders (black) or fewer (grey).
}
\label{fig:snb_bin_compare}
\end{figure}

\pagebreak

\begin{figure}[htbp!]
\centering
\includegraphics[width=\textwidth]{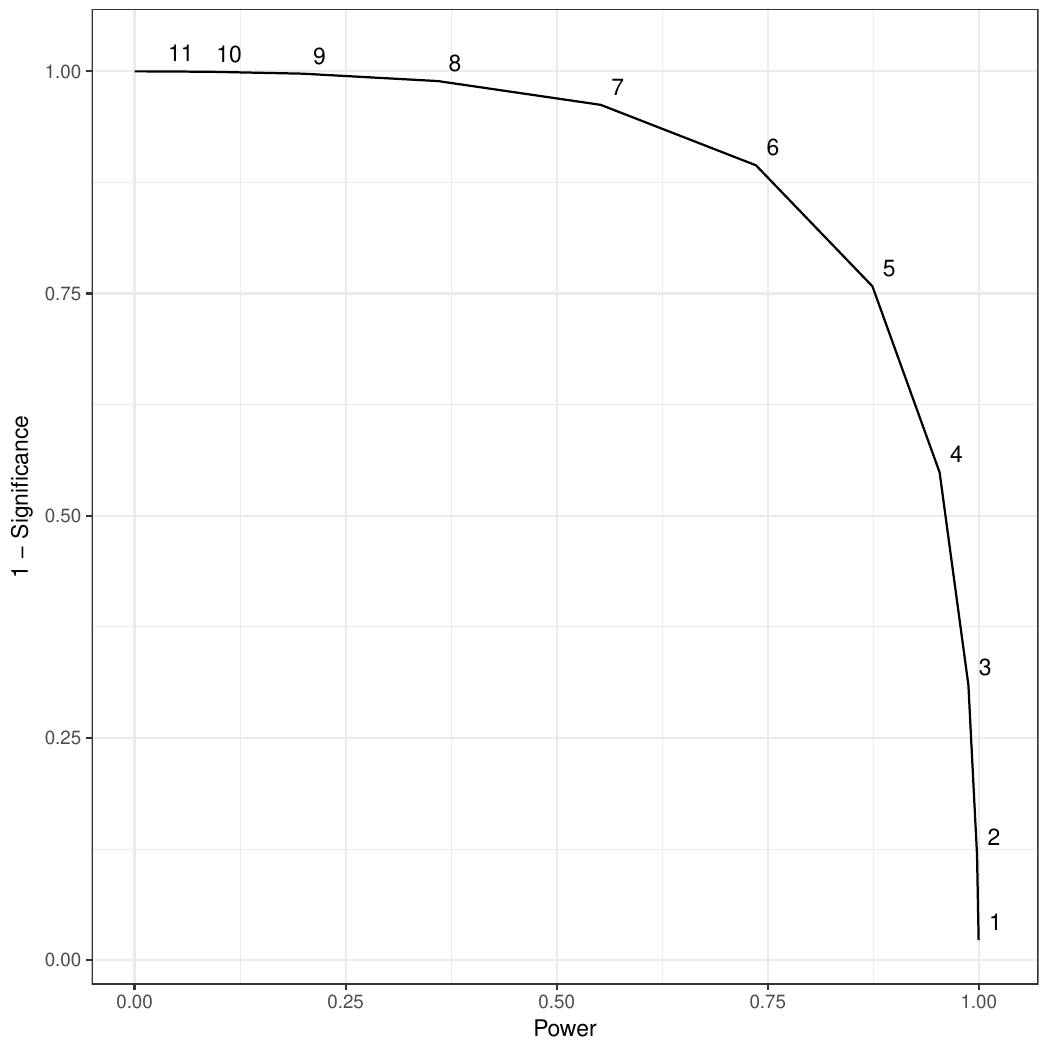}
\caption{
ROC curve of the all trial designs where the maximum number of patients is 17, $p=0.2$ under the null and $p=0.4$ under the alternative. The numerical values indicate the number of responses $s$ required reach the success endpoint.
}
\label{fig:all_sample_sizes_power_vs_signif}
\end{figure}

\pagebreak

\begin{figure}[htbp!]
\centering
\includegraphics[width=\textwidth]{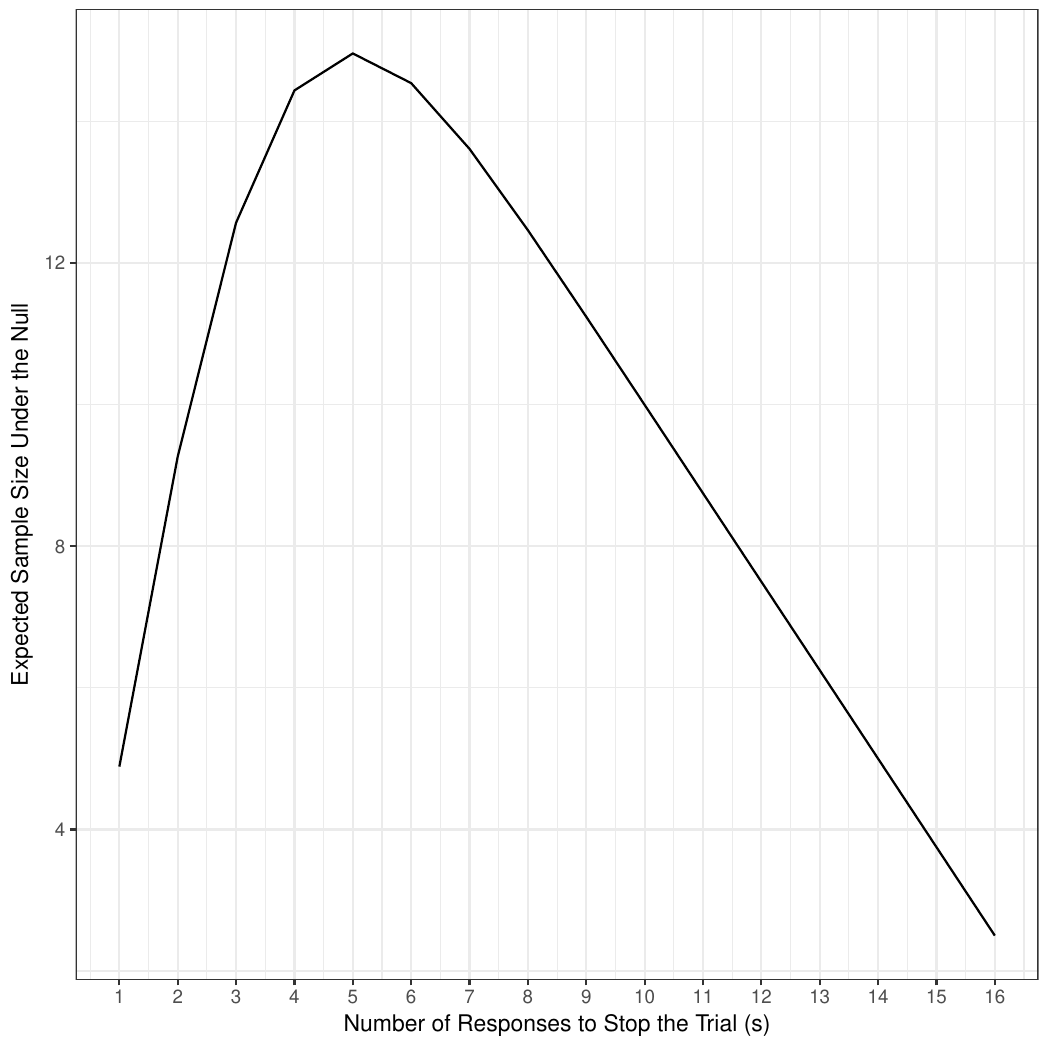}
\caption{
The expected sample size all trial designs where the maximum number
of patients is 17 and $p=0.2$.}
\label{fig:all_sample_sizes}
\end{figure}

\pagebreak

\end{document}